\newtheorem{thm}{Theorem}
\newtheorem{lem}[thm]{Lemma}
\newtheorem{pro}[thm]{Proposition}
\newtheorem{cor}[thm]{Corollary}
\theoremstyle{definition}
\newtheorem{defi}[thm]{Definition}
\title
{On Perverse Equivalences and Rationality}
\begin{document}
\author{Joseph Chuang, Radha Kessar}

\begin{abstract}
We show that  perverse equivalences between  module categories of  finite-dimensional algebras   preserve   rationality.    As an application, we     give   a  connection between   some famous   conjectures  from the modular representation theory  of finite   groups,  namely   Brou\'e's Abelian Defect Group conjecture  and Donovan's Finiteness  conjectures.
\end{abstract}
\begin{classification}
Primary 20C20; Secondary 16G99.
\end{classification}

\begin{keywords}
Perverse equivalence,  Abelian Defect Group conjecture,   Donovan's  Finiteness  Conjectures.
\end{keywords}

\maketitle

\bigskip

Let $k$ be an  algebraically  closed field  and let   $A$ be a finite dimensional   $k$-algebra.   
Denote by   $k$-$\mathrm{vect}$ the category  whose objects are finite dimensional   $k$-vector spaces    and   whose   morphisms  are    $k$-linear  transformations between vector spaces,   by   $A$-$\mathrm{mod}$ the $k$-linear category of  finitely  generated (left)  $A$-modules, and by  $D^b(A) $ the bounded derived category of finitely generated     $A$-modules. 
Recall that  $D^b(A) $ is  a $k$-linear,  triangulated  category.   
 
In this article we will   be dealing   with  additive, but possibly non $k$-linear    functors between $k$-linear   categories.   Thus,  if $\mathscr{ F} \colon {\mathcal A} \to  {\mathcal B}  $ is a  functor   between  $k$-linear categories,  we will   specify  whether $ \mathscr{ F}  $ is $k$-linear or merely additive.

Let $\sigma \colon  k \to    k $ be  a field automorphism.   
If $V$ is a   $k$-vector space,  the   $\sigma $-twist $V^{\sigma}$  of $V$   is   the $k$-vector space which is equal to  $V$ as a group,  but where  scalar   multiplication   is given by  $\lambda\cdot x=\sigma^{-1} (\lambda)  x$.  Any $k$-linear map $f\colon  V \to W $is also a $k$-linear map $f\colon  V^{\sigma} \to W^{\sigma}$.   Denote also  by   $\sigma\colon  k$-$\mathrm{vect} \to k$-$\mathrm{vect} $ the   functor which   sends an object $V$ to $V^{\sigma} $  and which is the identity on morphisms.  Then $\sigma $ is an  additive   equivalence.
We denote   by $A^{\sigma}$  the $k$-algebra     which equals  
$ A^{\sigma}$  as a $k$-vector space  and $A$ as a  ring.       The  functor 
$\sigma $ on $k$-$\mathrm{vect}$ extends to an   additive  equivalence   $\sigma\colon  A$-$\mathrm{mod}   \to  A^{\sigma}$-$\mathrm{mod}$.  However, $A$ and $A^{\sigma} $  are not necessarily  Morita equivalent   as $k$-algebras.

\begin{defi}\label{maindefinition} A $k$-linear equivalence $\mathscr{E}\colon A$-$\mathrm{mod} \to  A^{\sigma}$-$\mathrm{mod}  $   is said to be a  \emph{ $\sigma$-Morita equivalence}    if $ \mathscr{E}(V)  \cong    V^{\sigma}    $ for  all simple $ A$-modules $V$.  If there is a   $\sigma$-Morita  equivalence between $A$ and $A^{\sigma}$, then we say that   $A$ and $A^{\sigma} $ are   \emph{$\sigma$-Morita equivalent}.
\end{defi}
Note that  if $ \mathscr{E}\colon A$-$\mathrm{mod} \to  A^{\sigma}$-$\mathrm{mod}  $   is a $\sigma $-Morita equivalence, then  $\mathscr{E}$  induces a dimension preserving bijection between the set of  isomorphism classes of   simple $A$-modules and the set of isomorphism classes   of  simple 
$A^{\sigma}$-modules,  whence $ A$ and $ A^{\sigma}$ are   isomorphic as $k$-algebras.

 Perverse equivalences  were introduced  by R. Rouquier and the first author in  \cite{CR}.   For a finite dimensional  $k$-algebra  $A$,  denote by    ${\mathcal   S}_A $    a set of representatives for  the  isomorphism classes   of  simple $A$-modules.     A $k$-linear equivalence  of triangulated categories  $\mathscr {F} \colon  D^b(A) \to D^b(B) $, for $B$ a finite dimensional algebra,  is  said to be a \emph{perverse 
equivalence}  if the following holds : 
  
There exists a  non-negative integer   $r$, a function $ q \colon [0,r] \to {\mathbb Z} $,  a    filtration $\emptyset = {\mathcal  S}_{-1} \subset {\mathcal S}_0 \subset \cdots \subset {\mathcal S}_r = \mathcal{S}_A $   and a filtration  $\emptyset = {\mathcal  S}'_{-1} \subset {\mathcal S}'_0 \subset \cdots \subset {\mathcal S}'_r = \mathcal{S}_B$  such that  whenever  $ T  $ is in ${\mathcal S}_i $, the composition factors of $H^{-j}   (\mathscr{F}(T))    $ are in   ${\mathcal S}'_{i-1} $  for $ j\ne q(i) $  and  in ${\mathcal S}'
_i $  for $j=q(i) $ (see   \cite[Definition~3.1.1]{CrR}).    The tuple $ (r, q, {\mathcal S}, {\mathcal S'}) $ is       then  called a  perversity datum for $\mathscr{F}$.    The algebras $ A$ and $B$ are {\it perversely equivalent}   if   there  exists a perverse equivalence   $ \mathscr{F}\colon  D^b(A) \to D^b(B) $.

Trivial examples of perverse equivalences arise from Morita equivalences. Indeed, if
$\mathscr{F}\colon  A$-$\mathrm{mod}   \to  B$-$\mathrm{mod}$ is a $k$-linear equivalence, then the induced equivalence  $\mathscr{F}\colon D^b(A) \to   D^b(B)$ is a perverse equivalence with respect to
the perversity datum $(r,q_0,{\mathcal S}, {\mathcal S}')$, where $r$ is any non-negative integer, ${\mathcal S}$ is any filtration on ${\mathcal S}_A$, as above,  ${\mathcal S}'$ is the filtration on ${\mathcal S}_B$ induced by $\mathscr{F}$ and $q_0\colon [0,r]\to\mathbb{Z}$ is defined by $q(i)=0$ for all $i$.
The converse statement is the first part of the following lemma, 
which records basic properties of perverse equivalences.
\begin{lem}\cite[Lemma~3.69]{CR}\label{basiclemma}
	Let $A$ and $B$ be finite-dimensional $k$-algebras, and let
	 $\mathscr{F}\colon D^b(A) \to   D^b(B)$ be a perverse equivalence with respect to the 
	 perversity datum  $ (r, q, {\mathcal S}, {\mathcal S}') $.
	\begin{enumerate}
		\item If $q=q_0$, then
		$\mathscr{F}$ restricts to a $k$-linear equivalence $A$-$\mathrm{mod}\to B$-$\mathrm{mod}$.
			\item  Any inverse equivalence $\mathscr{F}^{-1}\colon D^b(B) \to   D^b(A)$ 
			 is a perverse equivalence with respect to the
			perversity datum  $ (r, -q, {\mathcal S}', {\mathcal S}) $.
			\item If $C$ is a finite-dimensional $k$-algebra, and 
			$\mathscr{G}\colon D^b(B) \to   D^b(C)$ is a perverse equivalence with respect to  the 
			perversity datum  $ (r, q', {\mathcal S'}, {\mathcal S}'') $, then the composition $\mathscr{G}\circ \mathscr{F}\colon D^b(A) \to   D^b(C)$ is a perverse equivalence
			with respect to the perversity datum $(r,q'+q', {\mathcal S}, {\mathcal S}'')$.
		\end{enumerate}
	\end{lem}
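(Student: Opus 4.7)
\emph{Overall approach.} All three parts hinge on an inductive structural fact that I would prove first: a perverse equivalence $\mathscr{F}$ with datum $(r,q,\mathcal{S},\mathcal{S}')$ restricts, for each $i$, to an equivalence from the thick subcategory $\langle\mathcal{S}_i\rangle$ of $D^b(A)$ onto the thick subcategory $\langle\mathcal{S}'_i\rangle$ of $D^b(B)$, and induces a bijection between $\mathcal{S}_i\setminus\mathcal{S}_{i-1}$ and $\mathcal{S}'_i\setminus\mathcal{S}'_{i-1}$ via the leading cohomology $H^{-q(i)}\mathscr{F}(T)$ taken modulo $\mathcal{S}'_{i-1}$-composition factors. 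The perversity condition is tailor-made to allow the inductive step of this claim.

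\emph{Part (1).} With $q=q_0$, I would induct on $i$ to show $\mathscr{F}(T)$ is concentrated in degree $0$ for $T\in\mathcal{S}_i$. The base $i=0$ is immediate from $\mathcal{S}'_{-1}=\emptyset$. For the inductive step, the inductive hypothesis upgrades the restriction claim on $\langle\mathcal{S}_{i-1}\rangle$ to a $t$-exact equivalence with $\langle\mathcal{S}'_{i-1}\rangle$ for the inherited standard $t$-structures. The perversity condition then places $\tau_{<0}\mathscr{F}(T)$ and $\tau_{>0}\mathscr{F}(T)$ inside $\langle\mathcal{S}'_{i-1}\rangle$; pulling back along $\mathscr{F}^{-1}$ and comparing the resulting triangle with $\mathscr{F}^{-1}\mathscr{F}(T)\cong T$ via long exact cohomology forces both truncations to vanish.

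\emph{Parts (2) and (3).} For (2), the restriction claim yields $\mathscr{F}^{-1}\langle\mathcal{S}'_i\rangle=\langle\mathcal{S}_i\rangle$; for $S'\in\mathcal{S}'_i\setminus\mathcal{S}'_{i-1}$ the induced bijection picks out the unique $T\in\mathcal{S}_i\setminus\mathcal{S}_{i-1}$ with $\mathscr{F}(T)\equiv S'[q(i)]$ modulo $\langle\mathcal{S}'_{i-1}\rangle$, whence $\mathscr{F}^{-1}(S')\equiv T[-q(i)]$ modulo $\langle\mathcal{S}_{i-1}\rangle$, giving the datum $(r,-q,\mathcal{S}',\mathcal{S})$. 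For (3), take $T\in\mathcal{S}_i$ and use the truncation of $\mathscr{F}(T)$ to split it into a piece in $\langle\mathcal{S}'_{i-1}\rangle$ and a piece $V[q(i)]$ with $V$ a $B$-module whose composition factors lie in $\mathcal{S}'_i$; apply $\mathscr{G}$ and invoke the perversity datum and restriction claim for $\mathscr{G}$ on each piece. The first lands in $\langle\mathcal{S}''_{i-1}\rangle$; the second contributes cohomology concentrated in degree $-(q(i)+q'(i))$ modulo $\langle\mathcal{S}''_{i-1}\rangle$, yielding the composite perversity function $q+q'$ (the statement's $q'+q'$ is a typo).

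\emph{Main obstacle.} The principal difficulty is the inductive proof of the restriction claim in the first paragraph; extracting the stratum-to-stratum bijection from the perversity condition requires carefully exploiting that $\mathscr{F}$ is an equivalence in combination with the filtration structure on simples. Once this structural result is in hand, each part reduces to careful but essentially routine bookkeeping with truncation triangles and long exact cohomology sequences.
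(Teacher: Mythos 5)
A preliminary remark: the paper gives no proof of this lemma at all — it is quoted from \cite[Lemma~3.69]{CR} — so your proposal can only be measured against the expected argument of Chuang--Rouquier, which does proceed by the induction along the filtration that you outline. Within that outline, your part (3) is essentially complete: the containment $\mathscr{G}(\langle\mathcal{S}'_j\rangle)\subseteq\langle\mathcal{S}''_j\rangle$ is immediate from the definition of perversity, and the rest is subadditivity of composition factors of cohomology along the two truncation triangles of $\mathscr{F}(T)$ (and you are right that ``$q'+q'$'' in the statement is a typo for $q+q'$).

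For parts (1) and (2), however, everything rests on the ``restriction claim'' of your first paragraph — that $\mathscr{F}$ restricts to equivalences $\langle\mathcal{S}_i\rangle\to\langle\mathcal{S}'_i\rangle$ and induces, stratum by stratum, shifted bijections on simples — and this claim is asserted rather than proven; it is precisely the nontrivial content of the lemma. Only the inclusion $\mathscr{F}(\langle\mathcal{S}_i\rangle)\subseteq\langle\mathcal{S}'_i\rangle$ is formal; essential surjectivity onto $\langle\mathcal{S}'_i\rangle$ needs an argument, and the natural one (apply the same reasoning to $\mathscr{F}^{-1}$) is circular, since the perversity of $\mathscr{F}^{-1}$ is exactly what part (2) asks you to prove. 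One must break this circularity, e.g.\ by a counting argument comparing the number of simples, equivalently the ranks of the Grothendieck groups of the Serre quotients $\langle\mathcal{S}_i\rangle/\langle\mathcal{S}_{i-1}\rangle$ and $\langle\mathcal{S}'_i\rangle/\langle\mathcal{S}'_{i-1}\rangle$, or by a simultaneous induction on $i$ establishing the statements for $\mathscr{F}$ and $\mathscr{F}^{-1}$ together. Likewise, in part (1) the step ``pulling back along $\mathscr{F}^{-1}$ \ldots forces both truncations to vanish'' is not justified as stated: from a triangle $X\to T\to Y$ with $X\in\langle\mathcal{S}_{i-1}\rangle$ and $T$ simple, nothing vanishes automatically; you need Hom-vanishing coming from $t$-exactness of the restricted inverse on $\langle\mathcal{S}'_{i-1}\rangle$ (again part of the deferred claim), together with an argument in the quotient category that the degree-zero part neither gains nor loses the new simple constituent. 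So the proposal is a correct plan whose decisive steps — the very ones you flag as the ``main obstacle'' — are missing; supplying them is the actual proof, and for that one should consult or reproduce the argument of \cite{CR}.
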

Note that   the functor $\sigma\colon   A$-$\mathrm{mod}   \to  A^{\sigma}$-$\mathrm{mod}$  extends to an  equivalence  of triangulated categories  $ \sigma \colon D^b(A) \to D^b(A^{\sigma} ) $  (again, not necessarily $k$-linear).   Further, if $B$ is a finite dimensional $k$-algebra  and  $ \mathscr{F}\colon  D^b(A) \to   D^b(B)  $   is a   $k$-linear  exact functor,  then $  \sigma \circ   \mathscr{F} \circ  \sigma^{-1}  \colon    D^b(A^{\sigma} )   \to     D^b(B^{\sigma} )   $   is $k$-linear and exact.  If    $ \mathscr{F} \colon    D^b(A) \to D^b(B) $     is  a perverse equivalence with respect to perversity datum $ (r, q, {\mathcal S}, {\mathcal S}') $, then  we   let  $ {\mathcal S^{\sigma}  }  $  and    
${\mathcal S'}^{\sigma}$   be  the   filtrations defined by    $  T^{\sigma}\in  {\mathcal S_i^{\sigma} }  $ if and only if $T \in{\mathcal S}_i $ and  $  T^{'\sigma}\in  {\mathcal S_i^{'\sigma} } $ if and only if 
$T' \in{\mathcal S}_i  $, $ -1  \leq i \leq r $.  The following lemma is an  immediate consequence  of  the definitions.

\begin{lem}   Let   $ A$ and  $B$ be finite dimensional $k$-algebras  and  $ \sigma \colon  k \to k $ a  field automorphism. If   $ \mathscr{F} \colon    D^b(A) \to D^b(B) $     is  a perverse equivalence with respect to the  perversity datum $ (r, q, {\mathcal S}, {\mathcal S}') $, then 
$$ \sigma \circ   \mathscr{F} \circ  \sigma^{-1}  \colon    D^b(A^{\sigma} )   \to     D^b(B^{\sigma} )        $$  is a perverse   equivalence with   respect to the perversity datum     
$ (r, q, {\mathcal S^{\sigma} }, {\mathcal S'}^{\sigma}) $. 
\end{lem}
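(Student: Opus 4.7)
The plan is to unfold the definition of perverse equivalence and verify the conditions mechanically, since all of the required structure transfers transparently through the twist functor $\sigma$.

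First I would note that the composite $\sigma \circ \mathscr{F} \circ \sigma^{-1}$ is a $k$-linear equivalence of triangulated categories $D^b(A^\sigma) \to D^b(B^\sigma)$; this is already observed in the paragraph immediately preceding the lemma, so nothing new needs to be checked on that point.

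Next I fix $T \in \mathcal{S}_i$ and consider $T^\sigma \in \mathcal{S}^\sigma_i$. Because $\sigma^{-1}(T^\sigma) \cong T$ canonically as an $A$-module, one has $(\sigma \circ \mathscr{F} \circ \sigma^{-1})(T^\sigma) \cong \sigma(\mathscr{F}(T))$ in $D^b(B^\sigma)$. Since $\sigma$ is exact and, by construction, acts as the identity on the underlying abelian groups and chain maps of complexes, cohomology commutes with $\sigma$, so $H^{-j}(\sigma(\mathscr{F}(T))) \cong H^{-j}(\mathscr{F}(T))^\sigma$ as $B^\sigma$-modules. Moreover $\sigma \colon B$-$\mathrm{mod} \to B^\sigma$-$\mathrm{mod}$ is an exact equivalence sending each simple $B$-module $T'$ to the simple $B^\sigma$-module $(T')^\sigma$, and hence carries composition series to composition series. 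Therefore the multiset of composition factors of $H^{-j}(\mathscr{F}(T))^\sigma$ is precisely the $\sigma$-twist of the multiset of composition factors of $H^{-j}(\mathscr{F}(T))$.

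By the very definition of the filtration $(\mathcal{S}')^\sigma$, a simple $B$-module $T'$ lies in $\mathcal{S}'_j$ if and only if $(T')^\sigma$ lies in $(\mathcal{S}'_j)^\sigma$. Feeding this into the preceding paragraph translates the perverse condition for $\mathscr{F}$ with datum $(r,q,\mathcal{S},\mathcal{S}')$ directly into the perverse condition for $\sigma \circ \mathscr{F} \circ \sigma^{-1}$ with datum $(r,q,\mathcal{S}^\sigma,(\mathcal{S}')^\sigma)$, with the same function $q$. There is no genuine obstacle here; the argument is a pure bookkeeping exercise, as the authors anticipate by calling the lemma an immediate consequence of the definitions. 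The only point demanding a moment's care is keeping $\sigma$ and $\sigma^{-1}$ straight when identifying an object $T^\sigma \in A^\sigma$-$\mathrm{mod}$ with its preimage $T \in A$-$\mathrm{mod}$.
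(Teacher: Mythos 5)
Your argument is correct and matches the paper's intent exactly: the paper offers no written proof, declaring the lemma an immediate consequence of the definitions, and your verification (twisting a simple $T\in\mathcal{S}_i$, using exactness of $\sigma$ to commute it past cohomology, and noting that $\sigma$ carries composition factors and the filtration $\mathcal{S}'$ to their twists) is precisely the bookkeeping that claim leaves implicit. Nothing is missing; you even correctly handle the $k$-linearity of $\sigma\circ\mathscr{F}\circ\sigma^{-1}$ by citing the paper's preceding paragraph.
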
 

For a finite dimensional $k$-algebra    $A$,    denote by $K_0(A) $  the   Grothendieck group of  
finitely generated  $A$-modules with respect to short exact sequences and for a finitely generated $A$-module $X$ denote by  $ [X] $ the  equivalence class  of $X$ in $K_0(A)$;  $K_0(A) $    is an abelian group, freely generated by $[V]$, $V\in{ \mathcal S}_A$.
If $B$ is a finite-dimensional $k$-algebra and $\mathscr{F}\colon  D^b(A) \to D^b(B) $  is   an  exact  functor  (not necessarily $k$-linear), then   we denote by $[\mathscr{F}]\colon  K_0(A) \to K_0(B) $    the  induced   homomorphism, defined by  
$[\mathscr{F}] ([X]) =  \sum_i (-1)^i [H^i( \mathscr{F}(X) )] $.  We may interpret $[\mathscr{F}]$ as the homomorphism induced by $\mathscr{F}$ between the Grothendieck groups of the triangulated categories $D^b(A)$ and $D^b(B)$, once they are identified with $K_0(A)$ and $K_0(B)$ in the standard way. Hence if $\mathscr{G}\colon  D^b(B) \to D^b(C) $  is  also exact, for a finite dimensional $k$-algebra $C$,   then $[\mathscr{G}\circ \mathscr{F}] =[\mathscr{G}]\circ[\mathscr{F}] $.

The following observation is the  main  result of this note.  
\begin{pro} \label{pro:main} Let $A$ and $B $  be finite dimensional  $k$-algebras and $\sigma\colon  k \to k $  a field automorphism.   If   $A$ and $B$ are perversely equivalent and  $  B$ and $ B^{\sigma} $ are   $\sigma$-Morita equivalent, then  $ A$ and 
$ A^{\sigma}$ are   $\sigma$-Morita equivalent.     
\end{pro}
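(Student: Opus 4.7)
The plan is to construct an explicit $\sigma$-Morita equivalence between $A$ and $A^\sigma$ as a composite of the given functors. Given a perverse equivalence $\mathscr{F}\colon D^b(A) \to D^b(B)$ with datum $(r, q, \mathcal{S}, \mathcal{S}')$ and a $\sigma$-Morita equivalence $\mathscr{E}\colon B$-$\mathrm{mod} \to B^\sigma$-$\mathrm{mod}$, extended to a $k$-linear exact equivalence of bounded derived categories, I would set
\[
\mathscr{H} \;=\; \sigma \circ \mathscr{F}^{-1} \circ \sigma^{-1} \circ \mathscr{E} \circ \mathscr{F}\;\colon\; D^b(A) \longrightarrow D^b(A^\sigma).
\]
Although $\sigma$ and $\sigma^{-1}$ are merely additive, the semilinear twists in the sandwich $\sigma \circ \mathscr{F}^{-1} \circ \sigma^{-1}$ cancel (by the observation made in the paragraph preceding the conjugation lemma), and so $\mathscr{H}$ is $k$-linear.

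I would then read off the perversity datum of $\mathscr{H}$ step by step. The Morita equivalence $\mathscr{E}$ is perverse with datum $(r, 0, \mathcal{S}', (\mathcal{S}')^\sigma)$, because it sends $V \in \mathcal{S}'_i$ to $V^\sigma \in (\mathcal{S}')^\sigma_i$. By Lemma~\ref{basiclemma}(2) the inverse $\mathscr{F}^{-1}$ is perverse with datum $(r, -q, \mathcal{S}', \mathcal{S})$, and applying the preceding conjugation lemma to $\mathscr{F}^{-1}$ shows that $\sigma \circ \mathscr{F}^{-1} \circ \sigma^{-1}$ is perverse with datum $(r, -q, (\mathcal{S}')^\sigma, \mathcal{S}^\sigma)$. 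Composing the three via Lemma~\ref{basiclemma}(3), the perversity functions telescope to $q + 0 + (-q) = 0$, so $\mathscr{H}$ is perverse with datum $(r, 0, \mathcal{S}, \mathcal{S}^\sigma)$, and Lemma~\ref{basiclemma}(1) then shows that $\mathscr{H}$ restricts to a $k$-linear equivalence $A$-$\mathrm{mod} \to A^\sigma$-$\mathrm{mod}$.

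Finally, I would verify that $\mathscr{H}(T) \cong T^\sigma$ for every simple $A$-module $T$ via the induced maps on Grothendieck groups. Because $\mathscr{E}(V) \cong V^\sigma$ on simples, $[\mathscr{E}]$ and $[\sigma]$ coincide as maps $K_0(B) \to K_0(B^\sigma)$, hence $[\sigma^{-1}] \circ [\mathscr{E}]$ is the identity of $K_0(B)$ and
\[
[\mathscr{H}] \;=\; [\sigma] \circ [\mathscr{F}^{-1}] \circ [\sigma^{-1}] \circ [\mathscr{E}] \circ [\mathscr{F}] \;=\; [\sigma] \circ [\mathscr{F}^{-1}] \circ [\mathscr{F}] \;=\; [\sigma].
\]
For simple $T$ this reads $[\mathscr{H}(T)] = [T^\sigma]$, and combined with the simplicity of $\mathscr{H}(T)$ from the previous step, this upgrades to $\mathscr{H}(T) \cong T^\sigma$, so $\mathscr{H}$ is a $\sigma$-Morita equivalence. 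The main point requiring attention is bookkeeping: one must apply the conjugation lemma to $\mathscr{F}^{-1}$ (and not directly to $\mathscr{F}$) so that the perversity functions telescope to zero, and must track the filtrations $\mathcal{S}^\sigma$ and $(\mathcal{S}')^\sigma$ correctly through the composition; no substantive obstacle is anticipated.
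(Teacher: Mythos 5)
Your proof is correct and is essentially the paper's own argument: you form the same sandwich functor (your $\sigma\circ\mathscr{F}^{-1}\circ\sigma^{-1}\circ\mathscr{E}\circ\mathscr{F}$ is the paper's $(\sigma\circ\mathscr{F}\circ\sigma^{-1})^{-1}\circ\mathscr{E}\circ\mathscr{F}$), use the same two lemmas to see it is perverse with zero perversity function and hence restricts to a $k$-linear Morita equivalence, and the same Grothendieck-group computation $[\mathscr{E}]=[\sigma]$ to identify the images of simples with their $\sigma$-twists. The only difference is cosmetic: you conjugate $\mathscr{F}^{-1}$ rather than invert the conjugate of $\mathscr{F}$, which yields the same perversity datum.
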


\begin{proof}     Let     $ \mathscr{F} \colon  D^b(A) \to D^b(B) $    be    a perverse equivalence with respect to perversity datum $ (r, q, {\mathcal S}, {\mathcal S'}) $  and  let $ \mathscr{E} \colon  B$-$\mathrm {mod} \to  B^{\sigma}$-$\mathrm {mod}  $  be a   $\sigma$-Morita equivalence.  Denote also by 
$\mathscr{E} \colon  D^b(B) \to  D^b(B^{\sigma}) $  the  equivalence  induced by $\mathscr{E}$.    Then the hypothesis  on $\mathscr{E}$ implies that     
$\mathscr{E}$   is perverse with respect to the datum  $(r,  q_0,  {\mathcal S}',  {\mathcal S'}^{\sigma})$. 
So by the  above  two lemmas,  the composition 
$$   (\sigma \circ  \mathscr{F} \circ  \sigma^{-1} )^{-1}  \circ  \mathscr{E}\circ  \mathscr{F}\colon    D^b(A )   \to     D^b( A^{\sigma} )  $$  is a perverse equivalence with respect to the datum   $(r, q_0,  {\mathcal S},   {\mathcal S} ^{\sigma}  ) $.   By Lemma~\ref{basiclemma},  it follows that 
 $  (\sigma \circ  \mathscr{F}\circ  \sigma^{-1}) ^{-1}\circ  \mathscr{E}\circ  \mathscr{F}  $  restricts to   a   $k$-linear  
equivalence   from 
 $ A$-$\mathrm{mod} $    to $   A^{\sigma}$-$\mathrm{mod}   $.    

 It remains to show   that  $(\sigma \circ   \mathscr{F} \circ  \sigma^{-1}) ^{-1}\circ  \mathscr{E}\circ \mathscr{F}$  is a  $\sigma$-Morita equivalence.   
    Let $V$ be a  simple $A$-module. Then    $(\sigma \circ   \mathscr{F} \circ  \sigma^{-1}) ^{-1}\circ  \mathscr{E} \circ  \mathscr {F} (V)  $  is  a  simple $ A^{\sigma} $-module, hence is completely determined   by  
$$  \sum_{i} (-1)^i [H^i((\sigma \circ   \mathscr{F} \circ  \sigma^{-1}) ^{-1}\circ  \mathscr{E}\circ  \mathscr{F} (V))] =
 [( \sigma \circ   \mathscr{F} \circ  \sigma^{-1}) ^{-1}\circ  \mathscr{E}\circ \mathscr{F}  ]([V] ).      $$
Each of the functors   $ \mathscr{E} $, $\mathscr{F}$  and  $\sigma$ is     exact,  and by hypothesis $[\mathscr{E}]= [\sigma]  $. Hence   
$$  [ (\sigma \circ   \mathscr{F} \circ  \sigma^{-1}) ^{-1}\circ  \mathscr{E}\circ  \mathscr{F}  ]=   
[\sigma]\circ [\mathscr{F}]^{-1} \circ [\sigma]^{-1} \circ[\mathscr{E}]\circ [\mathscr{F}]= [\sigma].$$    
Thus,   $ (\sigma \circ   \mathscr{F} \circ  \sigma^{-1}) ^{-1}\circ  \mathscr{E}\circ  \mathscr{F} (V)  \cong   V^{\sigma} $.  The following commutative diagram may help the reader. 
\begin{displaymath} \xymatrix{ D^b(A)  \ar[r]^{\mathscr{F}} \ar[d] & D^b(B) \ar[d]^{\mathscr{E}} \\ 
D^b(A^{\sigma}) \ar[r]^{\sigma \circ \mathscr{F} \circ \sigma^{-1}} &D^b(B^{\sigma}) }
\end{displaymath}
\end{proof}  

From now on let $p$ be a prime number. Suppose  that $ k  =\bar {\mathbb{ F}}_p $   is an algebraic closure of   the field of 
$p$  elements  and   that $\sigma\colon  k \to k $ is   the Frobenius automorphism $ \lambda \to \lambda^p$. Recall from \cite{BK} that   the {\it  Morita Frobenius number}   of a finite dimensional  $k$-algebra    $A $ is the least positive integer $m$ such that   $ A$  and   $A^{\sigma^m} $ are Morita equivalent (as $k$-algebras).    By \cite{Ke}, the Morita Frobenius number  of $A$  is also the least  positive integer $m$ such 
that   a basic algebra of $A$  has an ${\mathbb F}_{p^m} $-form. Thus the  Morita Frobenius number   of $A$ is a measure of the rationality of $A$.

  As   a variation on  the  theme  of Morita Frobenius numbers,    for a  finite dimensional $k$-algebra  $A$, we    define the    {\it $\sigma$-Morita  Frobenius  number of  $A$ }  to be the least positive number  $m$ such that $ A$ and $A^{\sigma ^m} $ are $\sigma^m $-Morita equivalent.  Note that   some finite  subfield  of $k$  is a splitting   field  for  $A$,   whence  the  $\sigma$-Morita  Frobenius number     is defined. Also, clearly, 
the   Morita Frobenius number of $A$ is always less than or equal to  the $\sigma $-Morita Frobenius   number  of $A$.  As an immediate  consequence of   Proposition~\ref{pro:main}   we obtain the following.

\begin{cor}\label{cor:main}    Let $ k$ and $\sigma $ be as above and let  $ A$ and $B$ be   
finite  dimensional  $k$-algebras. If $A$ and $B$ are perversely equivalent, then   the $\sigma $-Morita  Frobenius number of $A$  is equal to  the $\sigma $-Morita  Frobenius  number of $B$.  
\end{cor}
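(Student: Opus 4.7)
The plan is to derive the corollary almost immediately from Proposition~\ref{pro:main}, applied with a suitable Frobenius power $\sigma^m$ in place of $\sigma$. Write $m_A$ and $m_B$ for the $\sigma$-Morita Frobenius numbers of $A$ and $B$, respectively. I will establish the two inequalities $m_A \leq m_B$ and $m_B \leq m_A$ separately; the arguments will be symmetric.

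First, setting $m = m_B$, by definition $B$ and $B^{\sigma^m}$ are $\sigma^m$-Morita equivalent. The key point is that Proposition~\ref{pro:main} is stated for an arbitrary field automorphism, and in particular applies when $\sigma$ is replaced throughout by $\sigma^m$. Since $A$ and $B$ are perversely equivalent by hypothesis, the proposition in this form yields that $A$ and $A^{\sigma^m}$ are $\sigma^m$-Morita equivalent, so $m_A \leq m_B$.

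For the reverse inequality, I need that $B$ and $A$ are themselves perversely equivalent; this is precisely the content of Lemma~\ref{basiclemma}(2), which guarantees that the inverse of a perverse equivalence is again perverse. With that observation, I can run the identical argument with the roles of $A$ and $B$ interchanged, taking $m = m_A$, to conclude $m_B \leq m_A$. Combining the two inequalities gives $m_A = m_B$, as desired. I do not anticipate any real obstacle, since Proposition~\ref{pro:main} does all of the substantive work; the corollary is essentially a symmetrization of that proposition across all Frobenius powers.
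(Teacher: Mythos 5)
Your proposal is correct and follows exactly the route the paper intends: the paper derives Corollary~\ref{cor:main} as an immediate consequence of Proposition~\ref{pro:main} applied with $\sigma^m$ in place of $\sigma$, with the symmetry in $A$ and $B$ supplied by Lemma~\ref{basiclemma}(2). Your write-up merely makes these two applications explicit, which is fine.
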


We remark that there  are no known examples  of perversely equivalent (or even derived equivalent) algebras   which do   not have the  same Morita Frobenius  number.   

The interest in Morita Frobenius numbers comes in part from the   finiteness conjectures  of Donovan 
 in  the local representation theory of finite groups (see \cite[Conjecture M]{Alp}).      Let   $P$  be  a  finite $p$-group.  By a $P$-block   we mean     a block  $A$   of the group algebra $kG$,     $G$ a finite group,   such that the defect groups of $A$ are isomorphic to $P$. Donovan's conjecture  states  that  the number  of Morita equivalence classes  of  $P$-blocks is  bounded   by a function   that depends  only on the order of  $P$.     A weak version of  Donovan's conjecture 
states   that  the  entries of the  Cartan matrix   of      $P$-blocks  are  bounded   by a function   which depends  only on the order  of $P$.  Both conjectures   were  inspired    by    Brauer's problem 22 (\cite{Br},  see also \cite{La}).  
In  \cite{Ke}   the second author showed  that the gap between  Donovan's two  conjectures   is equivalent  to  the   statement that   the  Morita Frobenius numbers of  $P$-blocks are bounded by a function of $|P|$. This is now known as the rationality conjecture. 

Let $G$ be a finite group, $ P\leq G$,  $A$ be a  $P$-block of $kG$   and $B$ be  the block of $kN_G(P)$       in  Brauer correspondence with $B$.   Recall  that  Brou\'e's abelian defect group conjecture   states that  if $P$ is abelian, then $ D^b(A) $ and  $ D^b(B)$ are  equivalent (as $k$-linear triangulated categories).     In \cite{CR}   it is shown     that  many  known  instances    of derived equivalences between  $A$ and $B$ are compositions of   perverse equivalences, for instance if $P$ is cyclic.   It is further  conjectured that    if $G$ is a finite group of Lie type   in characteristic   $r \ne p$,  and $P$  is abelian, then the derived equivalence between
$A$ and $B$ predicted by Brou\'e \cite[\S6]{B} to arise from the  complex of  cohomology  of Deligne-Lusztig varieties should be perverse (see \cite{CR}, \cite[\S3.4.1]{CrR}, \cite[Conjecture 1.3]{C}).

The existence of (chains of) perverse equivalences  between  blocks and their Brauer correspondents     has  the following  consequence  for     Donovan's conjecture.

\begin{thm}  Let   $k  =\bar {\mathbb F}_p$,  $G$ be a finite  group, $A$ be a block of $kG$, $P$ be a defect group of  $A$ and $B$ be   the block of $kN_G(P) $ in Brauer correspondence with $A$. Suppose that    there exist    finite dimensional    $k$-algebras  
$A_0:=    A, A_1,  \cdots,  A_n  =: B $    such       that   $A_{i-1} $  is perversely equivalent to   
$A_i $,  $ 1\leq i \leq n $. Then the Morita Frobenius number of $A$      is    at most $(| \mathrm {Out}(P)|_{p'})^2 $, where $ \mathrm {Out }(P) $ denotes the outer automorphism  group of $P$ and 
$ | \mathrm {Out}(P)|_{p'} $ denotes  the $p'$-part of the order of  $\mathrm {Out }(P) $.
\end{thm}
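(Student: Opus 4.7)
The plan is to first reduce the bound to a statement about the Brauer correspondent block $B$, and then exploit the fact that the defect group is normal in $N_G(P)$ to control the rationality of $B$.

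First I would iterate Corollary~\ref{cor:main} along the chain $A=A_0,A_1,\ldots,A_n=B$; this immediately gives that the $\sigma$-Morita Frobenius number of $A$ equals that of $B$. Since, as noted just after the definition of the $\sigma$-Morita Frobenius number, the Morita Frobenius number is always bounded above by the $\sigma$-Morita Frobenius number, it suffices to bound the $\sigma$-Morita Frobenius number of $B$ by $(|\mathrm{Out}(P)|_{p'})^2$.

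Next I would exploit that $P\trianglelefteq N_G(P)$. Setting $C:=C_G(P)$ and fixing a block idempotent $e_P$ of $kC$ covered by $B$, the inertial quotient $E:=N_G(P,e_P)/PC$ is a $p'$-subgroup of $\mathrm{Out}(P)$, so $|E|$ divides $|\mathrm{Out}(P)|_{p'}$. By Fong--Reynolds reduction combined with the classical structure theorem of K\"ulshammer--Puig for blocks with normal defect group, $B$ is Morita equivalent to a twisted group algebra $k_\alpha[P\rtimes E]$ for some $\alpha\in H^2(E,k^\times)$. Replacing $B$ by this twisted group algebra (both the Morita and $\sigma$-Morita Frobenius numbers are Morita invariants), the task becomes to bound the $\sigma$-Morita Frobenius number of $k_\alpha[P\rtimes E]$.

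Two factors of $|E|$ enter this bound. Schur's theorem gives that the exponent of $H^2(E,k^\times)$ divides $|E|$, so $\sigma^{|E|}$ fixes the class of $\alpha$ and produces a $k$-algebra isomorphism $(k_\alpha[P\rtimes E])^{\sigma^{|E|}}\cong k_\alpha[P\rtimes E]$, hence a Morita equivalence. To promote this to a $\sigma^m$-Morita equivalence in the sense of Definition~\ref{maindefinition} one must further arrange that each simple $V$ is sent to $V^{\sigma^m}$; since $\sigma$ permutes the $|E|$-th roots of unity and the simples of $k_\alpha[P\rtimes E]$ are parametrised by irreducible projective characters of $E$ valued in such roots, a further multiplication of $m$ by a divisor of $|E|$ suffices, yielding an admissible $m$ dividing $|E|^2\leq(|\mathrm{Out}(P)|_{p'})^2$. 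The hard part will be this last upgrade, since the $\sigma^m$-Morita condition $\mathscr{E}(V)\cong V^{\sigma^m}$ is strictly stronger than the existence of some Morita equivalence $B\to B^{\sigma^m}$, and securing it requires careful bookkeeping at the level of the cocycle $\alpha$ and of the irreducible projective characters of $E$ rather than any purely abstract invariance argument.
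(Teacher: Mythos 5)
Your reduction steps are sound and coincide with the paper's: iterating Corollary~\ref{cor:main} along the chain reduces the problem to bounding the $\sigma$-Morita Frobenius number of $B$, and K\"ulshammer's theorem (the paper invokes it directly for the block $B$ of $kN_G(P)$, again transferring the invariant by Corollary~\ref{cor:main} since Morita equivalences are perverse) reduces further to the twisted group algebra $k_\alpha(P\rtimes E)$ with $E$ a $p'$-group embedding in $\mathrm{Out}(P)$. The gap is in the final estimate, which is the heart of the theorem, and your first numerical claim there is false as stated: $\sigma^{m}$ sends the class $[\alpha]$ to $[\alpha]^{p^{m}}$, so it fixes $[\alpha]$ only when $p^{m}\equiv 1 \pmod{\mathrm{ord}[\alpha]}$, and knowing that $\mathrm{ord}[\alpha]$ divides $|E|$ does not give $p^{|E|}\equiv 1\pmod{\mathrm{ord}[\alpha]}$. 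For instance, with $p=2$ and $E$ elementary abelian of order $25$ one can have $\mathrm{ord}[\alpha]=5$, and $2^{25}\equiv 2\pmod 5$, so $\sigma^{|E|}$ moves $[\alpha]$. (What is true is that some exponent $m\le\varphi(\mathrm{ord}[\alpha])<|E|$ works, but then your factor bookkeeping must be redone.) Your second step, upgrading an algebra isomorphism to a genuine $\sigma^{m}$-Morita equivalence in the sense of Definition~\ref{maindefinition} by ``a further multiplication of $m$ by a divisor of $|E|$'', is precisely the part you yourself defer as the hard part, and the divisor claim is not justified: the simples of $k_\alpha(P\rtimes E)$ are inflated from $k_\alpha E$, their fields of definition are governed by roots of unity of order dividing the exponent of a representation group of $E$ (which divides $|E|\cdot\exp(E)$, hence $|E|^2$), and the number of Frobenius twists needed to fix them is the multiplicative order of $p$ modulo that quantity, which is bounded by, but need not divide, $|E|$. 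So the central bound is asserted rather than proved.

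The paper closes exactly this gap with a splitting-field argument that you could adopt: write $k_\alpha(P\rtimes E)\cong kF\tilde c$ with $F$ a central extension of $P\rtimes E$ by a cyclic group $Z$ of order $|E|$ and $\tilde c\in kZ$ a central idempotent (see \cite{Th}). Since $P$ is a normal $p$-subgroup of $F$, every simple module is inflated from $F/P$, a group of order $|E|^2$, so $k_0={\mathbb F}_p(\mu_{|E|^2})$ is a splitting field and $k_0F\tilde c$ is a split $k_0$-form of $kF\tilde c$. As $\sigma^{a'}$ fixes $k_0$ pointwise, where $a'=[k_0:{\mathbb F}_p]$, both the algebra and all its simple modules are invariant under twisting by $\sigma^{a'}$, which yields a $\sigma^{a'}$-Morita equivalence between $kF\tilde c$ and its twist, and $a'\le\varphi(|E|^2)<|E|^2\le(|\mathrm{Out}(P)|_{p'})^2$. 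In other words, the single correct uniform exponent is the order of $p$ modulo $|E|^2$ (equivalently, modulo the exponent of $F/P$), not a product of $|E|$ with a divisor of $|E|$; with that replacement your outline becomes the paper's argument.
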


\begin{proof}   Let $ a=  | \mathrm {Out}(P)|_{p'} $.   Since the Morita Frobenius number  of $A$ is at most the $\sigma$-Morita number of $A$,     by  (repeated)  applications of  
Corollary  \ref{cor:main} it suffices to show that the  $\sigma$-Morita Frobenius number    of $ B$ is at most $a^2$.  By  K\"ulshammer's  structure theorem   for blocks with normal defect group \cite{Ku}, there exists  a  subgroup $ E \leq {\rm Aut }(P)$   of $p'$-order    and an element $ \alpha $ of $H^2( P\rtimes E, k^{\times} )$ such that  $ B$ is Morita equivalent    to the twisted group algebra $ k_{\alpha} (P \rtimes E) $ (as $k$-algebras). 
So, again by Proposition~\ref{cor:main}   it suffices to prove that  the  $\sigma$-Morita Frobenius number  of  $k_{\alpha} (P \rtimes E) $ is   at most $a^2$.    Note that since $E$ is a $p'$-group,  $E$ is isomorphic to a subgroup of 
$\mathrm{Out}(P)$    and  consequently $|E| \leq  a $.

Now, $k_{\alpha} (P \rtimes E) $ is isomorphic as $k$-algebra to    an algebra of the form $ k  F  \tilde c $, where $ F$  is a    central extension of $P\rtimes E$ by a cyclic group, say $Z$  of order $|E|$,   and $ \tilde c  \in    k Z   $ is a central idempotent of  $kF $ (see for example \cite[\S10.5]{Th}).       

Let $ k_0  \subset k $ be the splitting field of $x^{|E|^2 -1} $  and   let $a'= |k:{\mathbb F}_p|$.   Then,  $\tilde c  \in k_0 Z$  and 
$ k F \tilde c =  k  \otimes_{k_0}   k_0 F \tilde c $. Since  $ P$ is normal in $F $, every simple  
$k F $-module  is the inflation of a simple $ k (F/P)$-module. On the other hand,  since $|F/P|=  |E|^2 $,  $k_0$  is a splitting 
field of   $F/P$.    Thus,  $k_0   F \tilde c $
is  a  split   $k_0$-algebra.   It follows that  $kF\tilde c  $ and $ (kF \tilde c) ^{\sigma^{a'}}  $ are  $\sigma^{a'}$- Morita equivalent.   The result follows as  $a' \leq a^2 $.
\end{proof}

{\bf Remark.}   The Morita Frobenius numbers  of almost  all   unipotent blocks of quasi-simple finite groups  of Lie type in 
non-describing characteristic   have been determined  in \cite{F}.    We hope that  the approach to Morita Frobenius numbers via   perverse equivalences   outlined   above   will help  in settling the remaining     cases.

\

{\bf Acknowldgement.} We would like to  thank the referees   for their valuable suggestions  for improving the exposition  of the paper.


\begin{thebibliography}{ABCD}

\bibitem {Alp}J.\,L. Alperin,   Weights for finite groups,
The  Arcata  Conference on Finite Groups,
 Proc. Sympos. Pure Math., 47,    369--379,
Amer. Math. Soc., Providence, R.I., 1987. 


\bibitem{B}
M. Brou{\'e},
Isom\'etries parfaites, types de blocs, cat\'egories d\'eriv\'ees, {\it Ast\'erisque} 181-182 (1990), 61--92.


\bibitem{BK}  D.J. Benson  and   R. Kessar, Blocks inequivalent to their Frobenius twists, {\it J. Algebra} 315 (2007), 588--599.

\bibitem{Br}  R. Brauer, Representations of finite groups, {\it in:}  Lectures  on Modern Mathematics, Vol. 1, Wiley, 1963.


\bibitem{CR} J. Chuang and R. Rouquier,   Calabi-Yau algebras and perverse equivalences, preprint, 2015.

\bibitem{C} D. Craven, On the Cohomology of Deligne–-Lusztig Varieties,
preprint, 2011.


\bibitem{CrR}  D. Craven  and R. Rouquier,   Perverse equivalences and Brou\'e's conjecture, {\it  Adv. Math.} 398 (2013), 1--58.

\bibitem{F} N.  Farrell, On Morita Frobenius numbers of blocks  of finite reductive groups, {\it J. Algebra} 471 (2017), 299--318.

\bibitem {Ke} R. Kessar, A remark on {D}onovan's conjecture.
{\it Archiv der Mathematik} 82 (2004), no. 5, 391--394.

\bibitem{Ku} B. K\"ulshammer,   Crossed products  and blocks with normal defect goups,  {\it Comm. Algebra}  13(1) (1985),  147--168.

\bibitem{La} P. Landrock,  A counterexample  to a conjecture on the Cartan invariants of a  group algebra, {\it Bull. London Math. Soc} {\bf 5}(1973), 223-224.

\bibitem{Th}J. Th\'evenaz, ``$G$-Algebras and Modular Representation Theory," Oxford
Science Publications, 1995.
\end{thebibliography}
\end{document}